\theoremstyle{plain}
\newtheorem{theorem}{Theorem}
\newtheorem{lemma}{Lemma}
\theoremstyle{definition}
\theoremstyle{remark}
\newtheorem{remark}{Remark}
\numberwithin{equation}{section}
\newcommand{\e}{\epsilon}
\newcommand{\R}{\mathbb R}
\newcommand{\Rn}{\mathbb R^n}
\newcommand{\Rm}{\mathbb R^{n+1}}
\begin{document}

\title[Hardy Uncertainty Principle, Convexity and Parabolic Evolutions]{Hardy Uncertainty Principle, Convexity and Parabolic Evolutions}
\author{L. Escauriaza}
\address[L. Escauriaza]{UPV/EHU\\Dpto. de Matem\'aticas\\Apto. 644, 48080 Bilbao, Spain.}
\email{luis.escauriaza@ehu.eus}
\thanks{The first and fourth authors are supported  by the grants MTM2014-53145-P and IT641-13 (GIC12/96), the second author by NSF grant DMS-0456583, and the fourth author is also supported by SEV-2013-0323.}

\author{C. E. Kenig}
\address[C. E. Kenig]{Department of Mathematics\\University of Chicago\\Chicago, Il. 60637 \\USA.}
\email{cek@math.uchicago.edu}
\author{G. Ponce}
\address[G. Ponce]{Department of Mathematics\\
University of California\\
Santa Barbara, CA 93106\\
USA.}
\email{ponce@math.ucsb.edu}
\author{L. Vega}
\address[L. Vega]{UPV/EHU\\Dpto. de Matem\'aticas\\Apto. 644, 48080 Bilbao, Spain.}
\email{luis.vega@ehu.eus}
\keywords{uncertainty principle, heat equation.}
\subjclass{Primary: 35B05. Secondary: 35B60}
\begin{abstract}
We give a new proof of the $L^2$ version of Hardy's uncertainty principle based on calculus and on its dynamical version for the heat equation. The reasonings rely on new log-convexity properties and the derivation of optimal Gaussian decay bounds for solutions to the heat equation with Gaussian decay at a future time. We extend the result to heat equations with lower order variable coefficient.
\end{abstract}
\maketitle
\begin{section}{Introduction}\label{S: Introduction}
In this paper we continue the study in \cite{kpv02,ekpv06,ekpv08a,ekpv08b,ekpv09, cekpv10} related to the Hardy uncertainty principle and its relation to unique continuation properties for some evolutions.

One of our motivations  came from a well known result due to G. H. Hardy (\cite{Hardy}, \cite[pp. 131]{StSh}), which concerns the decay of a function $f$ and its Fourier transform, 
\[\widehat f(\xi)=(2\pi)^{-\frac n2}\int_{\Rn}e^{-i\xi\cdot x}f(x)\,dx.\]

\emph{If 
$f(x)=O(e^{-|x|^2/\beta^2})$, $\widehat f(\xi)=O(e^{-4|\xi|^2/\alpha^2})$ and  $1/\alpha\beta>1/4$, then $f\equiv 0$. Also, if $1/\alpha\beta=1/4$, $f$ is a constant multiple of $e^{-|x|^2/\beta^2}$.}

\vspace{0,1 cm}
As far as we know, the known proofs for this result and its variants - before the one in \cite{kpv02,ekpv06,ekpv08b,ekpv09, cekpv10} - use complex analysis (the Phragm\'en-Lindel\"of principle). There has also been considerable interest in a better understanding of this result and on extensions of it to other settings: \cite{CoPr}, \cite{Hor2}, \cite{SiSu}, \cite{bonamie} and \cite{bonamie2}.

The result can be rewritten in terms of the free solution of the Schr\"odinger equation 
\begin{equation*}
i\partial_tu+\triangle u=0,\ \text{in}\ \Rn\times (0,+\infty),
\end{equation*}
with initial data $f$,
\begin{equation*}
u(x,t)= (4\pi it)^{-\frac n 2} \int_{\Rn}e^{\frac{ i |x-y|^2}{4t}}f(y)\, dy= \left(2\pi it\right)^{-\frac n2}e^{\frac{i|x|^2}{4t}}\widehat{e^{\frac{i|\,\cdot\,|^2}{4t}}f}\left(\frac x{2t}\right),
\end{equation*}
in the following way: 
\vspace{0,1 cm}

\emph{If $u(x,0)=O(e^{-|x|^2/\beta^2})$, $u(x,T)=O(e^{-|x|^2/\alpha^2})$ and $T/\alpha\beta> 1/4$, then $u\equiv 0$. Also, if $T/\alpha\beta=1/4$, $u$ has as initial data a constant multiple of $e^{-\left(1/\beta^2+i/4T\right)|y|^2}$.}

The corresponding results in terms of $L^2$-norms, established in \cite{CoPr2}, are the following: 
\vspace{0,1 cm}

\emph{If $e^{|x|^2/\beta^2}f$, $e^{4|\xi |^2/\alpha^2}\widehat f$ are in $L^2(\Rn)$ and $1/\alpha\beta\ge 1/ 4$, then $f\equiv 0$.}

\vspace{0,1 cm}
\emph{If $e^{|x|^2/\beta^2}u(x,0)$, $e^{|x|^2/\alpha^2}u(x,T)$ are in $L^2(\Rn)$ and $T/\alpha\beta\ge 1/4$, then $u\equiv 0$.}

In \cite {{ekpv09}} we proved a uniqueness result in this direction for variable coefficients Schr\"odinger evolutions
\begin{equation}\label{E: 1.1}
\partial_t u=i\left(\triangle u+V(x,t) u\right)\ ,\ \text{in}\  \R^n\times [0,T].
\end{equation}
with bounded potentials $V$ verifying, $V(x,t)=V_1(x)+V_2(x,t)$, with $V_1$ real-valued and
\[\sup_{[0,T]}\|e^{T^2|x|^2/\left(\alpha t+\beta\left(T-t\right)\right)^2}V_2(t)\|_{L^\infty(\Rn)}<+\infty\]
or
\begin{equation*}
\lim_{R\rightarrow +\infty}\int_0^T\|V(t)\|_{L^\infty(\Rn\setminus B_R)}\,dt =0.
\end{equation*}
More precisely, we showed that the only solution $u$ to \eqref{E: 1.1} in $C([0,T], L^2(\Rn))$, which verifies 
\begin{equation*}
\|e^{|x|^2/\beta^2}u(0)\|_{L^2(\Rn)}+\|e^{|x|^2/\alpha^2}u(T)\|_{L^2(\Rn)}<+\infty
\end{equation*}
is the zero solution, when $T/\alpha\beta>1/ 4$. When $T/\alpha\beta=1/4$, we found a complex valued potential potential $V$ with 
\begin{equation*}
|V(x,t)|\lesssim\frac 1{1+|x|^2},\ \text{in}\ \R^n\times [0,T]
\end{equation*}
 and a nonzero smooth solution $u$ in $C^\infty([0,T],\mathcal S(\Rn))$ of \eqref{E: 1.1} with
 \begin{equation*}
 \|e^{|x|^2/\beta^2}u(0)\|_{L^2(\Rn)}+\|e^{|x|^2/\alpha^2}u(T)\|_{L^2(\Rn)}< +\infty.
 \end{equation*}

Thus, we established in \cite{ekpv09} that the  optimal version of Hardy's Uncertainty Principle in terms of $L^2$-norms holds for solutions to \eqref{E: 1.1} holds when $T/\alpha\beta>1/4$ for many general bounded potentials, while it can fail for some complex-valued potentials in the end-point case, $T/\alpha\beta=1/4$. Finally, in \cite{cekpv10} we showed that the reasonings in \cite{kpv02,ekpv06,ekpv08a,ekpv08b,ekpv09, cekpv10} provide the first proof (up to the end-point case) that we know of Hardy's uncertainty principle for the Fourier transform without the use of holomorphic functions.

The Hardy uncertainty principle also has a dynamical version associated to the heat equation,
\begin{equation*}
\partial_tu-\Delta u=0,\ \text{in}\ \R^n\times (0,+\infty),
\end{equation*}
with initial data $f$,
\begin{equation*}
u(x,t)= (4\pi t)^{-n/2} \int_{\Rn}e^{- |x-y|^2/4t}f(y)\, dy,\ \widehat{u}(\xi,t)= e^{-t|\xi|^2}\widehat{f}(\xi),\ x,\ \xi\in\Rn,\ t>0.
\end{equation*}
\vspace{0,1 cm}
In particular, its $L^\infty$ and $L^2$ versions yield the following statements:
\vspace{0,1 cm}

\emph{
If $u(0)$ is a finite measure in $\Rn$, $u(x,T)=O(e^{-|x|^2/\delta^2})$ and  $\delta<\sqrt{4T}$, then $f\equiv 0$. Also, if $\delta=\sqrt{4T}$, then $u(0)$ is a multiple of the Dirac delta function.}
\vspace{0,1 cm}

\emph{
If $u(0)$ is in $L^2(\Rn)$, $\|e^{|x|^2/\delta^2}u(T)\|_{L^2(\Rn)}$ is finite and $\delta\le\sqrt{4T}$, then $u\equiv 0$.}
\vspace{0,1 cm}

In \cite[Theorem 4]{ekpv08b} we proved that a dynamical $L^2$-version of Hardy uncertainty principle holds for solutions $u$ in $C([0,T], L^2(\Rn))\cap L^2([0,T], H^1(\Rn))$ to
\begin{equation}\label{E: 1.2}
\partial_tu=\Delta u+V(x,t)u,\ \text{in}\ \Rn\times [0,T],
\end{equation}
when $V$ is any bounded complex potential in $\Rn\times [0,T]$ and $\delta <\sqrt{T}$. Here, we find the optimal interior Gaussian decay over $[0,1]$ for solutions to \eqref{E: 1.2} with
\begin{equation*}
\|e^{|x|^2/\delta^2}u(T)\|_{L^2(\Rn)}<+\infty,
\end{equation*}
when $\delta>\sqrt{4T}$ and derive from it the full dynamical $L^2$ version of the Hardy uncertainty principle for solutions to \eqref{E: 1.2}, reaching the end-point case, $\delta=\sqrt{4T}$.

\begin{theorem}\label{T: lamejora1}
Assume that $u$ in $C([0,T], L^2(\Rn))\cap L^2([0,T], H^1(\Rn))$ verifies \eqref{E: 1.2} with $V$ in $L^\infty(\Rn\times [0,T])$. Assume that 
\begin{equation}\label{E: 0}
\|e^{T|x|^2/4(T^2+R^2)}u(T)\|_{L^2(\Rn)}<+\infty
\end{equation}
for some $R>0$. Then, there is a universal constant $N$ such that
\begin{multline}\label{E: loque hay que conseguir}
\sup_{[0,T]}\|e^{t|x|^2/4(t^2+R^2)}u(t)\|_{L^2(\Rn)}\\
\le e^{N\left(1+T^2\|V\|^2_{L^\infty(\Rn\times [0,T])}\right)}\left[\|u(0)\|_{L^2(\Rn)}+\|e^{T|x|^2/4(T^2+R^2)}u(T)\|_{L^2(\Rn)}\right].
\end{multline}
Moreover, $u$ must be identically zero when $\|e^{|x|^2/4T}u(T)\|_{L^2(\Rn)}$ is finite.
\end{theorem}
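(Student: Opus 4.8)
\emph{Plan.} I would prove the two assertions in turn: first the quantitative bound \eqref{E: loque hay que conseguir}, which is a log-convexity estimate, and then the uniqueness, which follows from it by letting $R\to 0$. For the quantitative estimate, set
\[
\varphi(x,t)=\tfrac{t|x|^2}{4(t^2+R^2)}=\tfrac14\,\psi(t)|x|^2,\qquad \psi(t)=\tfrac{t}{t^2+R^2},
\]
and conjugate: $v=e^{\varphi}u$ solves $\partial_t v=\mathcal S v+\mathcal A v+Vv$, where $\mathcal S=\Delta+|\nabla\varphi|^2+\partial_t\varphi$ is symmetric and $\mathcal A=-2\nabla\varphi\cdot\nabla-\Delta\varphi$ is antisymmetric on $L^2(\Rn)$. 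Writing $H(t)=\int_{\Rn}|v|^2\,dx$ and $D(t)=\langle\mathcal S v,v\rangle$, the goal is to show that $\log H$ is convex, up to an error governed by $\|V\|_{L^\infty}$, in the reparametrized time $\tau(t)=\tfrac1R\arctan(t/R)$.

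The computation that selects this precise weight is the identity $\mathcal S'+[\mathcal S,\mathcal A]=-2\psi\,\mathcal S$. Since $\varphi$ is quadratic in $x$ one finds $[\Delta,\mathcal A]=-2\psi\Delta$ and $[\,|\nabla\varphi|^2+\partial_t\varphi,\mathcal A\,]=2\psi(|\nabla\varphi|^2+\partial_t\varphi)$, so that $\mathcal S'+[\mathcal S,\mathcal A]+2\psi\mathcal S$ reduces to multiplication by
\[
\Big(\tfrac{\psi''}{4}+\tfrac{3\psi\psi'}{2}+\psi^3\Big)|x|^2,
\]
whose coefficient vanishes \emph{identically} for $\psi=t/(t^2+R^2)$; this is exactly why the weight is chosen as in \eqref{E: 0}. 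When $V\equiv0$ this yields $H'=2D$ and $D'=-2\psi D+2\|\mathcal S v\|^2\ge-2\psi D+2D^2/H$ by Cauchy--Schwarz, hence $(\log H)''+2\psi(\log H)'\ge0$. As $\int 2\psi\,dt=\log(t^2+R^2)$, this says $(t^2+R^2)(\log H)'$ is nondecreasing, i.e. $\log H$ is convex in $\tau$; interpolating between $\tau(0)$ and $\tau(T)$ and using $H(0)=\|u(0)\|_{L^2}^2$, $H(T)=\|e^{T|x|^2/4(T^2+R^2)}u(T)\|_{L^2}^2$ gives $\sup_{[0,T]}H\le\max\{H(0),H(T)\}$, which is \eqref{E: loque hay que conseguir} with $N=0$. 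For general bounded $V$ I would keep $F=Vv$ as a source, use $\|F\|\le\|V\|_{L^\infty}H^{1/2}$, and run the first-order–system convexity lemma for $(H,D)$ used in the parabolic setting of \cite{ekpv08b} (which never differentiates $V$ in time); the errors are $O(\|V\|_{L^\infty}^2)$ and, after integrating the resulting differential inequality over $[0,T]$, produce the prefactor $e^{N(1+T^2\|V\|_{L^\infty}^2)}$.

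The main obstacle is not this formal computation but its justification: a priori one only knows $u\in C([0,T],L^2)\cap L^2([0,T],H^1)$ together with the finiteness of $H$ at $t=0$ and $t=T$, so the interior finiteness of $H(t)$ and the integrations by parts in the commutator identities are not yet licit. I would remedy this by running the argument with $\varphi$ replaced by a bounded truncation (making $H$ automatically finite), checking that the truncation contributes only error terms of favorable sign or controlled size, and then removing it to recover the bound for $\varphi$ itself.

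Finally, for the uniqueness, assume $\|e^{|x|^2/4T}u(T)\|_{L^2}<\infty$. Since $\tfrac{T}{4(T^2+R^2)}\le\tfrac1{4T}$, hypothesis \eqref{E: 0} holds for every $R>0$ with $\|e^{T|x|^2/4(T^2+R^2)}u(T)\|_{L^2}\le\|e^{|x|^2/4T}u(T)\|_{L^2}$, so \eqref{E: loque hay que conseguir} bounds $\sup_{[0,T]}\|e^{t|x|^2/4(t^2+R^2)}u(t)\|_{L^2}$ by a constant $C$ independent of $R$. Letting $R\to0$, monotone convergence yields $\|e^{|x|^2/4t}u(t)\|_{L^2}\le C$ for every $t\in(0,T]$, whence for fixed $\rho>0$
\[
\int_{|x|\ge\rho}|u(t)|^2\,dx\le e^{-\rho^2/2t}\int_{\Rn}e^{|x|^2/2t}|u(t)|^2\,dx\le C^2e^{-\rho^2/2t},
\]
which tends to $0$ as $t\to0^+$, while $u(t)\to u(0)$ in $L^2$. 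Hence $\int_{|x|\ge\rho}|u(0)|^2=0$ for all $\rho>0$, forcing $u(0)=0$, and forward uniqueness for $\partial_t u=\Delta u+Vu$ then gives $u\equiv0$.
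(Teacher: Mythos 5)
Your algebra is correct, and it isolates the right mechanism: with $a(t)=t/4(t^2+R^2)$ one has $e^{8A}$ proportional to $t^2+R^2$, so $e^{8A}a$ is linear in $t$ and the coefficient $a''+24aa'+64a^3=e^{-8A}\left(e^{8A}a\right)''$ of $|x|^2$ vanishes identically; your identity $\mathcal S_t+\left[\mathcal S,\mathcal A\right]=-2\psi\,\mathcal S$ is exactly the \emph{degenerate} case $\left(e^{8A}a\right)''\equiv 0$ of the paper's Lemma \ref{L: el calculo del conmutador mas largo}. Your derivation of the uniqueness statement from the quantitative bound (monotonicity in $R$, the tail estimate $\int_{|x|\ge\rho}|u(t)|^2\,dx\le C^2e^{-\rho^2/2t}$, hence $u(0)=0$, then forward uniqueness) is also correct, and is in fact more detailed than the paper, which only remarks that the bound with $R\to 0^+$ implies $u\equiv 0$.

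The genuine gap is exactly where you point and then wave it away: the justification of the formal convexity computation. You propose to truncate the weight and claim the truncation "contributes only error terms of favorable sign or controlled size." For this particular weight that plan fails, for two compounding reasons. First, the weight is exactly degenerate: your differential inequality $(\log H)''+2\psi(\log H)'\ge 0$ has \emph{zero slack}, and truncating a time-dependent quadratic weight destroys the cancellation $a''+24aa'+64a^3=0$, because the three terms arise from different differential expressions in the weight ($\varphi$ itself, $|\nabla\varphi|^2$, $D^2\varphi\,\nabla\varphi\cdot\nabla\varphi$) which coincide only for an exact Gaussian; the resulting errors in the transition region have no sign and cannot be absorbed, since there is nothing to absorb them into. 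Second, the interior finiteness of $H(t)$ — which you need even to state the inequality — is essentially the conclusion of the theorem, not an a priori fact. This is precisely why the paper never argues with the limit weight directly: it first obtains a strictly sub-optimal interior Gaussian bound, $a_1(t)=t/(\delta+2-2t)^2$ in \eqref{E:127}, by combining the Appell transformation with the time-independent-weight log-convexity of \cite{ekpv08b}, and then runs an infinite self-improvement $a_k\to a_{k+1}$ through the $\xi$-dependent weights $e^{a_k|x|^2+b_kx\cdot\xi-T_k|\xi|^2}$ of Lemmas \ref{L: el calculo del conmutador mas largo} and \ref{L: el calculo del conmutador mas largo2}. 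Each step has strict convexity $\left(e^{8A_k}a_k\right)''>0$, and — crucially — the bound proved at step $k$ supplies the a priori hypothesis $\sup_{[0,1]}\|e^{\left(a-\e\right)|x|^2}u(t)\|<+\infty$ under which the regularization in Lemma \ref{L: el calculo del conmutador mas largo2} (mollification of $f_\e$, not truncation of the weight) is legitimate at step $k+1$; the optimal weight $t/4(t^2+R^2)$ is reached only as the limit $k\to\infty$ of this bootstrap. Unless you can produce a genuinely new argument giving the a priori interior Gaussian decay for the degenerate weight, your proof of \eqref{E: loque hay que conseguir} does not go through.
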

Theorem \ref{T: lamejora1} is optimal because
\begin{equation}\label{E: el enemigo}
u_R(x,t)=(t-iR)^{-\frac n2}e^{-|x|^2/4(t-iR)}=(t-iR)^{-\frac n2}e^{-(t+iR)|x|^2/4(t^2+R^2)},
\end{equation}
is a solution to the heat equation and for each fixed $t>0$, $t/4(t^2+R^2)$ is decreasing in the $R$-variable for $R>0$ . Also, observe that $t/4(t^2+R^2)$ attains its maximum value in the interior of $[0,T]$, when $R\neq T$, 

Notice that the finiteness condition on condition on $\|e^{|x|^2/4T}u(T)\|_{L^2(\Rn)}$ is independent of the size of the potential or the dimension and that we do not assume any regularity or strong decay of the potentials.  

This improvement of our results in \cite[Theorem 4]{ekpv08b} on the relation between Hardy uncertainty principle and its dynamical version for parabolic evolutions comes from  a better understanding of the solutions to \eqref{E: 1.2} which have Gaussian decay and of the adaptation to the parabolic context of the same kind of log-convexity arguments that we used in \cite{ekpv09} to derive the dynamical version of the Hardy uncertainty principle for Schr\"odinger evolutions.

We have not tried to  extend the results in Theorems \ref{T: lamejora1} to parabolic evolutions with nonzero drift terms
\begin{equation}\label{E: maÁgnetic}
\partial_tu=\Delta u+W(x,t)\cdot\nabla u+V(x,t)u.
\end{equation}
We expect that similar methods will yield  analogue results for solutions to \eqref{E: maÁgnetic} (See \cite{ds} for initial results following the approach initiated in \cite{kpv02} and \cite{ekpv06} for the case of Sch\"rodiger evolutions).

In what follows, $N$ denotes a universal constant depending at most on the dimension, $N_{a,\xi,\dots}$ a constant depending on the parameters $a,\xi,\dots$ In section \ref{S: a few lemmas} we give three Lemmas which are necessary for our proof  in section  \ref{S:2} of Theorem \ref{T: lamejora1}.
 \end{section}
\begin{section}{A few Lemmas}\label{S: a few lemmas}
In the sequel \[\left(f,g\right)=\int_{\Rn}f\overline g\,dx\ ,\ \|f\|^2=\left(f,f\right)\ \text{and}\ \|V\|_\infty=\|V\|_{L^\infty(\Rn\times [0,1])}.\]

In Lemma \ref{L: aproximaci—n de convexidad logar'tmica}, $\mathcal S$ and $\mathcal A$ denote respectively a symmetric and skew-symmetric bounded linear operators on $\mathcal S(\Rn)$. Both are allowed to depend smoothly on the time-variable, $\mathcal S_t = \partial_t\mathcal S$ and $[\mathcal S,\mathcal A]$ is the space commutator of $\mathcal S$ and $\mathcal A$. The reader can find a proof of Lemma \ref{L: aproximaci—n de convexidad logar'tmica} in \cite[Lemma 2]{ekpv09}. 
\begin{lemma}\label{L: aproximaci—n de convexidad logar'tmica} Let $\mathcal S$ and $\mathcal A$ be as above, $f$ lie in $C^\infty([c,d],\mathcal S(\Rn))$ and $\gamma: [c,d]\longrightarrow (0,+\infty)$ be a smooth function such that
\begin{equation*}
\left(\gamma\,\mathcal S_t f(t)+\gamma\left[\mathcal S,\mathcal A\right]f(t)+\dot\gamma\,\mathcal Sf(t),f(t)\right)\ge 0, \ \text{when}\ c\le t\le d.
\end{equation*}
Then, if $H(t)=\|f(t)\|^2$ and $\epsilon >0$
\begin{equation*}
H(t)+\epsilon\le \left(H(c)+\epsilon\right)^{\theta(t)}\left(H(d)+\epsilon\right)^{1-\theta(t)}e^{M_\epsilon(t)+2N_\e(t)}, \ \text{when}\ c\le t\le d,
\end{equation*}
where $M_\e$ verifies
\begin{equation*}
\partial_t\left(\gamma\,\partial_tM_\e\right)=-\gamma\,\frac{\|\partial_tf-\mathcal Sf-\mathcal Af\|^2}{H+\e}\, ,\ \text{in}\ [c,d],\quad M_\e(c)=M_\e(d)=0,
\end{equation*}
\begin{equation*}
N_\e=\int_c^d\left |\text{\it Re}\ \frac{\left(\partial_sf(s)-\mathcal Sf(s)-\mathcal Af(s),f(s)\right)}{H(s)+\e}\right |\,ds
\end{equation*}
and 
\begin{equation*}\label{E: el jodidoexponente}
\theta(t)=\frac{{\int_t^d}\frac{ds}{\gamma}}{\int_c^d\frac{ds}{\gamma}}\, .
\end{equation*}
\end{lemma}
 A calculation (see formulae (2.12), (2.13) and (2.14) in \cite{ekpv08b} with $\gamma=1$) shows that given smooth functions $a:[0,1]\longrightarrow [0,+\infty)$, $b: [0,1]\longrightarrow\R$ and $T:[0,1]\longrightarrow\R$, and $\xi$ in $\Rn$
\begin{equation*}
e^{a(t)|x|^2+ b(t)x\cdot\xi-T(t)|\xi|^2}\left(\partial_t-\triangle\right)e^{-a(t)|x|^2- b(t)x\cdot\xi+T(t)|\xi|^2}=\partial_t-\mathcal S-\mathcal A,
\end{equation*}
where $\mathcal S$ and $\mathcal A$ are the symmetric and skew-symmetric linear bounded operators on $\mathcal S(\Rn)$ given by
\begin{align}
&\mathcal S=\Delta+\left(a'+4a^2\right)|x|^2+\left(b'+4ab\right)x\cdot \xi+\left(b^2-T'\right)|\xi|^2,\label{E: parte semietrica}\\
&\mathcal A=-2\left(2ax+b\,\xi\right)\cdot\nabla -2na.\label{E: parteantisimetrica}
\end{align}
and
\begin{multline}\label{E: el conmutador}
\mathcal S_t+\left[\mathcal S,\mathcal A\right]=-8a\,\Delta +\left(a''+16aa'+32a^3\right)|x|^2\\+\left(b''+8ab'+8a'b+32a^2b\right) x\cdot\xi+\left(8ab^2+4bb'-T''\right)|\xi|^2.
\end{multline}

In Lemma \ref{L: el calculo del conmutador mas largo} we make choices of $a$, $b$ and $T$ which make non-negative the self-adjoint operator 
\begin{equation*}
e^{8A}\left(\mathcal S_t+\left[\mathcal S,\mathcal A\right]\right)+\left(e^{8A}\right)'\mathcal S,
\end{equation*}
where $A$ denotes an anti-derivative of $a$ in $[0,1]$ with $A(1)=0$, .

\begin{lemma}\label{L: el calculo del conmutador mas largo} Let $a:[0,1]\longrightarrow\R$ be a smooth function verifying 
\begin{equation}\label{E: condicionnecesa}
\left(e^{8A}a\right)''\ge 0,\ \text{in}\ [0,1],
\end{equation}
and let $b$ and $T$ be the solutions to
\begin{equation}\label{E: la formula que define}
\begin{cases}
\left(e^{8A}b \right)''=2\left(e^{8A}a \right)'',\ \text{in}\ [0,1],\\
b(0)=b(1)=0,
\end{cases}
\end{equation}
and
\begin{equation}\label{E: la formula que define2}
\begin{cases}
\left(e^{8A}T'\right)'=2\left(e^{8A}b^2\right)'-\left(e^{8A}a\right)'',\ \text{in}\ [0,1],\\
 T(0)=T(1)=0.
\end{cases}
\end{equation}
Then, 
\begin{equation*}
\left(e^{8A}\mathcal S_tf+e^{8A}\left[\mathcal S,\mathcal A\right]f+\left(e^{8A}\right)'\mathcal Sf,f\right)\ge 0,\ \text{when}\ f\in \mathcal S(\Rn)\ \text{and}\ 0\le t\le 1.
\end{equation*}
\end{lemma}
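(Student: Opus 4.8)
The plan is to compute the operator $e^{8A}\left(\mathcal S_t+[\mathcal S,\mathcal A]\right)+\left(e^{8A}\right)'\mathcal S$ explicitly from the formulas \eqref{E: parte semietrica} and \eqref{E: el conmutador}, and to verify that the defining equations \eqref{E: la formula que define} and \eqref{E: la formula que define2} for $b$ and $T$ have been engineered precisely so that the result is a non-negative multiplication operator. First I would record that $\left(e^{8A}\right)'=8a\,e^{8A}$, since $A'=a$. The decisive structural observation is then that the second-order parts cancel: the coefficient of $\Delta$ in \eqref{E: el conmutador} is $-8a$ while that in \eqref{E: parte semietrica} is $1$, so the Laplacian terms contribute $e^{8A}(-8a)+8a\,e^{8A}\cdot 1=0$. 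Hence the operator in question is a pure multiplication operator, i.e.\ a quadratic form in $x$ (with $\xi$ a fixed parameter and $t$ fixed), and there is nothing to do beyond matching coefficients.

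It then remains to identify the three surviving coefficients, those of $|x|^2$, $x\cdot\xi$ and $|\xi|^2$. A direct differentiation gives the identities $\left(e^{8A}a\right)''=e^{8A}\left(a''+24aa'+64a^3\right)$ and $\left(e^{8A}b\right)''=e^{8A}\left(b''+16ab'+8a'b+64a^2b\right)$, together with $\left(e^{8A}b^2\right)'=e^{8A}\left(8ab^2+2bb'\right)$ and $\left(e^{8A}T'\right)'=e^{8A}\left(T''+8aT'\right)$. Combining \eqref{E: parte semietrica} and \eqref{E: el conmutador} as above, the coefficient of $|x|^2$ becomes exactly $\left(e^{8A}a\right)''$; the coefficient of $x\cdot\xi$ becomes $\left(e^{8A}b\right)''$, which equals $2\left(e^{8A}a\right)''$ by \eqref{E: la formula que define}; and the coefficient of $|\xi|^2$ becomes $2\left(e^{8A}b^2\right)'-\left(e^{8A}T'\right)'$, which equals $\left(e^{8A}a\right)''$ by \eqref{E: la formula que define2}. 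Thus the whole operator is multiplication by
\[
\left(e^{8A}a\right)''\left(|x|^2+2x\cdot\xi+|\xi|^2\right)=\left(e^{8A}a\right)''\,|x+\xi|^2 .
\]

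Finally, since hypothesis \eqref{E: condicionnecesa} asserts $\left(e^{8A}a\right)''\ge 0$ on $[0,1]$, this multiplier is non-negative, and pairing against $f$ yields
\[
\left(e^{8A}\mathcal S_tf+e^{8A}\left[\mathcal S,\mathcal A\right]f+\left(e^{8A}\right)'\mathcal Sf,f\right)=\int_{\Rn}\left(e^{8A}a\right)''\,|x+\xi|^2\,|f(x)|^2\,dx\ge 0,
\]
as claimed. Conceptually the computation is a completion of the square: the equations for $b$ and $T$ force the $x\cdot\xi$ and $|\xi|^2$ coefficients into exactly the ratio that assembles the perfect square $|x+\xi|^2$, while the weight $e^{8A}$ is tailored to annihilate the Laplacian. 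I expect the only genuine difficulty to be the bookkeeping in the coefficient computations; once the cancellations are tracked correctly there is no analytic obstacle, and the sign condition \eqref{E: condicionnecesa} delivers the conclusion immediately.
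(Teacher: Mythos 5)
Your proposal is correct and is essentially identical to the paper's proof: both compute $e^{8A}\left(\mathcal S_t+[\mathcal S,\mathcal A]\right)+\left(e^{8A}\right)'\mathcal S$ using the same differentiation identities for $\left(e^{8A}a\right)''$, $\left(e^{8A}b\right)''$ and $\left(e^{8A}b^2\right)'$, observe the cancellation of the Laplacian terms, and use the defining equations for $b$ and $T$ to reduce the operator to multiplication by $\left(e^{8A}a\right)''\,|x+\xi|^2\ge 0$. The only difference is presentational: you spell out the cancellation of $\Delta$ and the identity $\left(e^{8A}T'\right)'=e^{8A}\left(T''+8aT'\right)$ explicitly, which the paper leaves implicit.
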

\begin{proof} From \eqref{E: parte semietrica}, \eqref{E: el conmutador}, the identities
\begin{align}
&\left(e^{8A}a\right)''=e^{8A}\left(a''+24aa'+ 64a^3\right).\notag\\
&\left(e^{8A}b\right)''=e^{8A}\left(b'' +16ab'+8a' b+64a^2b\right).\notag\\
&\left(e^{8A}b^2\right)'=e^{8A}\left(8ab^2+2bb'\right),\notag
\end{align}
and the definitions of $b$ and $T$, we have
\begin{multline*}  
e^{8A}\left(\mathcal S_t+\left[\mathcal S,\mathcal A\right]\right)+\left(e^{8A}\right)' \mathcal S\\
= \left(e^{8A}a\right)'' |x|^2+\left(e^{8A}b\right)''x\cdot \xi+\left(2\left(e^{8A}b^2\right)'-\left(e^{8A}T'\right)'\right)|\xi|^2 \\
=\left(e^{8A}a\right)'' \left(|x|^2+2 x\cdot \xi+|\xi|^2\right)=\left(e^{8A}a\right)'' |x+\xi|^2.\\
\end{multline*}
The later and  \eqref{E: condicionnecesa} implies Lemma \ref{L: el calculo del conmutador mas largo}.
\end{proof}
In the next Lemma we assume that $u$ in $C([0,1], L^2(\Rn))\cap L^2([0,1], H^1(\Rn))$ verifies \eqref{E: 1.2} in $\Rn\times (0,1]$ and  
\begin{equation*}
\|e^{|x|^2/\delta^2}u(1)\| <+\infty.
\end{equation*}
\begin{lemma}\label{L: el calculo del conmutador mas largo2} Let $a:[0,1]\longrightarrow [0,+\infty)$ be a smooth function with $a(0)=0$, $a(1)=1/\delta^2$, $\left(e^{8A}a\right)''>0$ in $[0,1]$ and
\begin{equation*}
\sup_{[0,1]}\| e^{\left(a(t)-\e\right)|x|^2}u(t)\|< +\infty,\ \text{when}\ 0<\e\le 1.
\end{equation*}
Then, there is a universal constant $N$ such that for $b$ and $T$ as in \eqref{E: la formula que define} and \eqref{E: la formula que define2},
\begin{equation*}
\|e^{a(t)|x|^2+b(t)x\cdot\xi-T(t)|\xi|^2}u(t)\|\le e^{N\left(1+\|V\|_{\infty}^2\right)}\left(\|u(0)\|+\|e^{|x|^2/\delta^2}u(1)\|\right),
\end{equation*}
when $\xi$ is in $\Rn$ and $0\le t\le 1$.
\end{lemma}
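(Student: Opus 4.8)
The plan is to read the assertion off a single application of the log-convexity Lemma~1 (the first Lemma) to the conjugated function $f(t)=e^{a(t)|x|^2+b(t)x\cdot\xi-T(t)|\xi|^2}u(t)$, taken with the weight $\gamma=e^{8A}$. Since $u$ solves \eqref{E: 1.2}, the conjugation identity stated just before Lemma~\ref{L: el calculo del conmutador mas largo} shows that $f$ satisfies $\partial_tf-\mathcal Sf-\mathcal Af=Vf$, with $\mathcal S$ and $\mathcal A$ the operators in \eqref{E: parte semietrica} and \eqref{E: parteantisimetrica}. The choice $\gamma=e^{8A}$ is exactly the one for which Lemma~\ref{L: el calculo del conmutador mas largo} applies: with $b$ and $T$ given by \eqref{E: la formula que define} and \eqref{E: la formula que define2} and the hypothesis $(e^{8A}a)''>0$, the sign condition $\left(\gamma\,\mathcal S_tf+\gamma[\mathcal S,\mathcal A]f+\dot\gamma\,\mathcal Sf,f\right)\ge0$ required to run Lemma~1 on $[0,1]$ holds. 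Because $a(0)=b(0)=T(0)=0$ we have $f(0)=u(0)$, and because $a(1)=1/\delta^2$, $b(1)=T(1)=0$ we have $f(1)=e^{|x|^2/\delta^2}u(1)$; thus $H(0)=\|u(0)\|^2$ and $H(1)=\|e^{|x|^2/\delta^2}u(1)\|^2$ are precisely the two quantities appearing on the right-hand side of the claim.

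Both error terms in Lemma~1 are controlled by $\|\partial_tf-\mathcal Sf-\mathcal Af\|=\|Vf\|\le\|V\|_\infty\|f\|$. This gives at once $N_\e\le\|V\|_\infty$. For $M_\e$, its defining identity reads $\partial_t(\gamma\,\partial_tM_\e)=-\gamma\,\|Vf\|^2/(H+\e)$, and since $0\le\|Vf\|^2/(H+\e)\le\|V\|_\infty^2$, a maximum-principle comparison for the operator $w\mapsto\partial_t(\gamma\,\partial_tw)$ with vanishing boundary data yields $0\le M_\e\le\|V\|_\infty^2\,\psi$, where $\psi$ solves $(\gamma\psi')'=-\gamma$ on $[0,1]$ with $\psi(0)=\psi(1)=0$.

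Universality of the constant $N$ then rests on the bound $\max_{[0,1]}\psi\le\tfrac12$, which I would prove independently of $\delta$, $R$ and the particular $a$. Here $A'=a\ge0$ and $A(1)=0$ force $\gamma=e^{8A}$ to be nondecreasing with $\gamma\le1$, so $\int_0^r\gamma\le r\,\gamma(r)$ and $\int_0^1\gamma^{-1}\ge1$. Writing the first integral $\gamma\psi'=p-\int_0^t\gamma$ and determining $p\ge0$ from $\psi(1)=0$, the inequality $\int_0^r\gamma\le r\,\gamma(r)$ gives $p\int_0^1\gamma^{-1}\le\tfrac12$, whence $\psi(t)\le p\int_0^t\gamma^{-1}\le\tfrac12$. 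Consequently $M_\e+2N_\e\le\tfrac12\|V\|_\infty^2+2\|V\|_\infty\le N(1+\|V\|_\infty^2)$ uniformly in $\e$; inserting this into the conclusion of Lemma~1, letting $\e\downarrow0$, taking square roots, and using $X^\theta Y^{1-\theta}\le X+Y$ for $X,Y\ge0$ and $\theta\in[0,1]$ produces the asserted inequality (after absorbing the factor $\tfrac12$ from the square root into $N$).

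The main obstacle is not any of these computations but the rigorous justification that Lemma~1 is applicable, since it is stated for $f\in C^\infty([0,1],\mathcal S(\Rn))$ whereas here $f=e^{a|x|^2+\dots}u$ need not be Schwartz: the standing hypothesis only provides $\sup_{[0,1]}\|e^{(a-\e)|x|^2}u\|<+\infty$, that is, slightly less Gaussian decay than the weight defining $f$. I would deal with this by running the whole argument with truncated weights obtained by replacing $a(t)|x|^2$ with a bounded approximation, so that the conjugated function lies in the correct space and $\mathcal S$, $\mathcal A$ act boundedly, exploiting parabolic smoothing for $t>0$ together with the a priori $\e$-decay to keep every constant uniform in the truncation parameter, and then removing the truncation by monotone convergence. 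Keeping the ``future'' quantity $H(1)$ uniformly bounded throughout this limiting procedure is the delicate point.
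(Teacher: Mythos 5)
Your core computation is the same as the paper's: conjugate to get $\partial_tf-\mathcal Sf-\mathcal Af=Vf$, run Lemma 1 with $\gamma=e^{8A}$, use Lemma 2 for the sign condition, identify $f(0)=u(0)$ and $f(1)=e^{|x|^2/\delta^2}u(1)$, and bound the error terms by $N_\e\le\|V\|_\infty$ and $M_\e\le\tfrac12\|V\|_\infty^2$ (your comparison argument with $(\gamma\psi')'=-\gamma$, using $A'=a\ge0$, $A\le 0$, is essentially the paper's ``explicit formula'' bound and is correct). The gap is exactly at the point you yourself flag as delicate, and the repair you propose --- replacing $a(t)|x|^2$ by a bounded truncation --- is the wrong move. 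The whole argument rests on the algebraic identity of Lemma 2, namely $e^{8A}\left(\mathcal S_t+[\mathcal S,\mathcal A]\right)+\left(e^{8A}\right)'\mathcal S=\left(e^{8A}a\right)''|x+\xi|^2\ge 0$, which holds only because the weight is \emph{exactly} quadratic. Once the weight is truncated in $x$, the operators $\mathcal S$, $\mathcal A$ in \eqref{E: parte semietrica}--\eqref{E: parteantisimetrica} acquire terms involving derivatives of the truncation, the commutator generates first-order errors supported where the truncation acts, and these have no sign and cannot be removed by ``monotone convergence'': controlling them uniformly in the truncation parameter is of the same order of difficulty as the estimate being proved. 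There is a second, smaller, gap: Lemma 1 requires smoothness in time on the closed interval, and parabolic smoothing only gives regularity for $t>0$, so one cannot apply it on $[0,1]$ as you do.

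The paper resolves both issues differently. It keeps the weight exactly quadratic but shrinks its coefficient: $a_\e=a-\e$, $A_\e=A+\e(1-t)$, with $b_\e$, $T_\e$ re-solved from \eqref{E: la formula que define}--\eqref{E: la formula que define2} for $(a_\e,A_\e)$; then Lemma 2 still applies, since $\left(e^{8A_\e}a_\e\right)''>0$ for $0<\e\le\e_a$, and the hypothesis $\sup_{[0,1]}\|e^{(a-\e)|x|^2}u(t)\|<+\infty$ makes $f_\e=e^{a_\e|x|^2+b_\e x\cdot\xi-T_\e|\xi|^2}u$ finite in $L^2$ for every $t$, which your $f$ is not known to be for $0<t<1$. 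Schwartz-type regularity is then manufactured by parabolic (Schauder) estimates for $t>0$ together with a space--time mollification $f_{\e,\rho}=f_\e\ast\theta_\rho$, whose commutator errors are $O(\rho)$ in $L^2$; Lemma 1 is applied on interior intervals $[\varrho,1-\varrho]$, and the limits are taken in the order $\rho\to0$, then $\varrho=\varrho_\e\to0$ chosen so that $H_\e(1-\varrho_\e)\le H_\e(1)+\e$ while $H_\e(\varrho_\e)$ is controlled by the energy $\sup_{[0,1]}\|u(t)\|^2$, and finally $\e\to0$ using the $L^2$ energy inequality for \eqref{E: 1.2}. Without this scheme (or a genuinely workable substitute), your argument remains a formal computation rather than a proof.
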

\begin{proof}
 For $\xi$ in $\Rn$ and $\e>0$, set 
\begin{equation*}
f_\e(x,t)=e^{a_\e|x|^2+b_\e x\cdot \xi-T_\e |\xi|^2}u(x,t),
\end{equation*}
with $a_\e=a-\e$, $A_\e=A+\e(1-t)$, and with $b_\e$ and $T_\e$ as in Lemma \ref{L: el calculo del conmutador mas largo} but with $a$ and $A$ replaced by $a_\e$ and $A_\e$ respectively.
The local Schauder estimates for solutions to \eqref{E: 1.2} show that 
\begin{multline*}
r|\nabla u(x,t)|+ r^2\left(\text{\rlap |{$\int_{B_r(x)\times (t-r^2,t]}$}}|\partial_su|^p+|D^2u|^p\, dyds\right)^{\frac{1}{p}}\\
\le N_{p}\left(1+r^2\|V\|_{L^\infty(\Rn\times [0,1])}\right)\text{\rlap |{$\int_{B_{2r}(x)\times (t-4r^2,t]}$}}|u|\, dyds
\end{multline*}
for $1< p<\infty$, $0<r\le \sqrt{t}/2$, $0<t\le 1$. Thus, $f_\e$ is in $W^{2,1}_2(\Rn\times[\varrho,1])$ and verifies
\begin{equation}\label{E: condicon de acotaci—n1}
\begin{split}
&\sup_{[0,1]}\|f_\e(t)\|\le N_{a,\e,\xi}\sup_{[0,1]}\|e^{\left(a-\frac\e2\right)|x|^2}u(t)\|,\\
&\sup_{[\varrho,1]}\|\nabla f_\e(t)\|\le N_{a,\e,\xi,\varrho}\sup_{[0,1]}\|e^{\left(a-\frac\e 2\right)|x|^2}u(t)\|\end{split}
\end{equation}
for $0<\varrho\le \frac12$ and
\begin{equation}\label{E: la ecuacion de f}
\partial_tf_\e-\mathcal S_\e f_\e-\mathcal A_\e f_\e=V(x,t)f_\e,\ \text{in}\ \Rn\times (0,1],
\end{equation}
where $\mathcal S_\e$ and $\mathcal A_\e$ are the operators defined in \eqref{E: parte semietrica} and \eqref{E: parteantisimetrica} with $a$, $A$, $b$ and $T$ replaced by $a_\e$, $A_\e$, $b_\e$ and $T_\e$ respectively. Also, \eqref{E: condicon de acotaci—n1}, the equation \eqref{E: la ecuacion de f} verified by $f_\e$  and \cite[Lemma 1.2]{Temam} show that $f_\e$ is in $C((0,1], L^2(\Rn))$.

Extend $f_\e$ as zero outside $\Rn\times [0,1]$ and let $\theta$ in $C^\infty(\Rm)$ be a mollifier supported in the unit ball of $\Rm$. For $0<\rho\le \frac 14$, set $f_{\e,\rho}=f_\e\ast\theta_\rho$ and
\begin{equation*}
\theta^{x,t}_\rho(y,s)=\rho^{-n-1}\theta(\tfrac{x-y}\rho\, ,\tfrac{t-s}\rho).
 \end{equation*}
 Then, $f_{\e,\rho}$ is in $C^\infty([0,1],\mathcal S(\Rn))$ and for $x$ in $\Rn$ and $\rho\le t\le 1-\rho$,
 \begin{equation}\label{E: formulalarga}
 \begin{split}
&\left(\partial_tf_{\e,\rho}-\mathcal S_\e f_{\e,\rho}-\mathcal A_\e f_{\e,\rho}\right)(x,t)=\left(Vf_{\e}\right)\ast\theta_\rho(x,t)\\
&+\int f_\e\left(q_\e(y,s,\xi)-q_\e(x,t,\xi)\right)\theta^{x,t}_\rho\,dyds\\
&+\int \nabla_yf_\e\cdot\left[\left(a_\e(t)x+2b_\e(t)\xi\right)-\left(a_\e(s)y+2b_\e(s)\xi\right)\right]\theta^{x,t}_\rho\,dyds,
\end{split}
\end{equation}
with
\begin{multline*}
q_{\e}(x,t,\xi)=\left(a_{\e}'(t)+4a_{\e}^2(t)\right)|x|^2\\+\left(b_{\e}'(t)+4a_{\e}(t)b_{\e}(t)\right)x\cdot\xi+\left(b_{\e}^2(t)-T_{\e}'(t)\right)|\xi|^2-2na_\e(t).
\end{multline*}
The last identity gives,
\begin{equation*}
\left(\partial_tf_{\e,\rho}-\mathcal S_\e f_{\e,\rho}-\mathcal A_\e f_{\e,\rho}\right)(x,t)=\left(Vf_\e\right)\ast\theta_\rho(x,t)+A_{\e,\rho}(x,t),
\end{equation*}
in $\Rn\times [\rho,1-\rho]$, where $A_{\e,\rho}$ denotes the sum of the second and third integrals in the right hand side of \eqref{E: formulalarga}. Moreover, from \eqref{E: condicon de acotaci—n1} there is $N_{a,\e,\xi,\varrho}$ such that for $0<\varrho<\frac 12$ and $0<\rho\le\varrho$,
\begin{equation*}
\sup_{[\varrho,1-\varrho]}\|A_{\e,\rho}(t)\|_{L^2(\Rn)}\le \rho N_{a,\e,\xi,\varrho}\sup_{[-1,1]}\|e^{\left(a(t)-\frac\e 2\right)|x|^2}u(t)\|. \label{E: controldeA}
\end{equation*}
Also, $\left(e^{8A_\e}a_\e\right)''>0$ in $[0,1]$, when $0<\e\le\e_a$,
and from Lemma \ref{L: el calculo del conmutador mas largo} we can apply to $\mathcal S_\e$, $\mathcal A_\e$ and $f_{\e,\rho}$, the conclusions of Lemma \ref{L: aproximaci—n de convexidad logar'tmica} with $[c,d]=[\varrho,1-\varrho]$, $\gamma=e^{8A_\e}$ and $H_{\e,\rho}(t)=\|f_{\e,\rho}(t)\|^2$. Thus,
\begin{equation}\label{E: casialli}
H_{\e,\rho}(t)\le \left(H_{\e,\rho}(\varrho)+H_{\e,\rho}(1-\varrho)+2\e\right)e^{M_{\e,\rho}(t)+2N_{\e,\rho}}, \ \text{when}\ \varrho\le t\le 1-\varrho,
\end{equation}
where $M_{\e,\rho}$ verifies
\begin{equation*}
\begin{cases}
\partial_t\left(e^{8A_\e}\partial_tM_{\e,\rho}\right)=-e^{8A_\e}\,\frac{\|\partial_tf_{\e,\rho}-\mathcal S_\e f_{\e,\rho}-\mathcal A_\e f_{\e,\rho}\|^2}{H_{\e,\rho}+\e}\, ,\ \text{in}\ [\varrho,1-\varrho],\\
M_{\e,\rho}(\varrho)=M_{\e,\rho}(1-\varrho)=0,
\end{cases}
\end{equation*}
and
\begin{equation*}
N_{\e,\rho}=\int_{\varrho}^{1-\varrho}\frac{\|\partial_sf_{\e,\rho}(s)-\mathcal S_\e f_{\e,\rho}(s)-\mathcal A_\e f_{\e,\rho}(s)\|}{\sqrt{H_{\e,\rho}(s)+\e}}\,ds.
\end{equation*}
We can now pass to the limit in \eqref{E: casialli}, when $\rho$ tends to zero and derive that for $H_\e(t)=\|f_\e(t)\|^2$, $0<\varrho\le \frac12$ and $0<\e\le\e_a$, we have
\begin{equation}\label{E: casialli2}
H_{\e}(t)\le \left[H_{\e}(\varrho)+H_{\e}(1-\varrho)+2\e\right]e^{M_{\e}(t)+2\|V\|_{\infty}}, \ \text{in}\ [\varrho,1-\varrho],
\end{equation}
with
\begin{equation}\label{E; uansolucion}
\begin{cases}
\partial_t\left(e^{8A_\e}\partial_tM_{\e}\right)=-\,e^{8A_\e}\,\frac{\|\partial_tf_{\e}-\mathcal S_\e f_{\e}-\mathcal A_\e f_{\e}\|^2}{H_{\e}}\, ,\ \text{in}\ [0,1].\\
M_{\e}(0)=M_{\e}(1)=0,
\end{cases}
\end{equation}

By writing an explicit formula for the solution to \eqref{E; uansolucion}, it follows from the monotonicity of $A$; i.e. $A'\ge 0$ in $[0,1]$ and \eqref {E: la ecuacion de f} that
\begin{equation*}
M_\e(t)\le N\left(1+\|V\|_{\infty}^2\right).
\end{equation*}

Also, there is $N_a>0$ such that $|b_{\e}'|+|T_{\e}'|\le N_a$, when $0<\e\le \e_a$. The later, the continuity of $f_\e$ in $C((0,1], L^2(\Rn))$ and the fact that $a(0)=b_\e(0)=T_\e(0)=0$ show, that for each fixed $\xi\in\Rn$ and all $0<\e<\e_a$, there is $\varrho_\e$ with $\lim_{\e\to 0^+}\varrho_\e=0$ such that $H_{\e}(1-\varrho_\e)\le H_\e(1)+\e$ and $H_\e(\varrho_\e)\le \sup_{[0,1]}\|u(t)\|$. Thus, after taking $\varrho=\varrho_\e$ in \eqref{E: casialli2}, we get
\begin{equation*}
\|e^{a_\e (t)|x|^2+b_\e (t)x\cdot\xi-T_\e (t)|\xi|^2} u(t)\| 
\le  e^{N\left(1+\|V\|_{\infty}^2\right)}\left[\sup_{[0,1]} \|u(t)\|+\| e^{|x|^2/\delta^2}u(1)\|+3\e\right],
\end{equation*}
for $\varrho_\e\le t\le 1-\varrho_\e$. Then, let  $\e\to 0^+$ and recall the $L^2$ energy inequality verified by solutions to \eqref{E: 1.2}.
\end{proof}
\end{section}
\begin{section}{Proof of Theorem \ref{T: lamejora1}}\label{S:2}
\begin{proof} By scaling it suffices to prove Theorem \ref{T: lamejora1} when $T=1$. Assume first that $u$ in $C([0,1], L^2(\Rn))\cap L^2([0,1], H^1(\Rn))$ verifies \eqref{E: 1.2} in $\Rn\times (0,1]$ and  
\begin{equation*}
\|e^{|x|^2/\delta^2}u(1)\| <+\infty
\end{equation*}
for some $\delta>2$. Following \cite[Theorem 4]{ekpv08b}, for $\alpha=1$ and $\beta =1+\frac 2{\delta}$, define
\begin{equation*}
\widetilde u(x,t)=\left(\tfrac{\sqrt{\alpha\beta}}{\alpha (1-t)+\beta t}\right)^{\frac n2}u(\tfrac{\sqrt{\alpha\beta}x}{\alpha(1-t)+\beta t}, \tfrac{\beta t}{\alpha(1-t)+\beta t})e^{\frac{(\alpha-\beta)|x|^2}{4(\alpha(1-t)+\beta t)}}\,.
\end{equation*}
Then, $\widetilde u$ is in $C([0,1], L^2(\Rn))\cap L^2([0,1], H^1(\Rn))$ and from \cite[Lemma 5]{ekpv08b} with $A+iB=1$
\begin{equation*}
\partial_t\widetilde u=\Delta\widetilde u +\widetilde V(x,t)\widetilde u,\ \text{in}\ \Rn\times (0,1],
\end{equation*}
with
\begin{equation*}
\widetilde V(x,t)=\tfrac{\alpha\beta}{(\alpha(1-t)+\beta t)^2}V(\tfrac{\sqrt{\alpha\beta}x}{\alpha(1-t)+\beta t},\tfrac{\beta t}{\alpha(1-t)+\beta t}).
\end{equation*}
Also, for $\gamma=\frac 1{2\delta}$
\begin{equation*}
\|e^{\gamma |x|^2}\widetilde u(0)\|=\|u(0)\|\ \text{and}\ \|e^{\gamma|x|^2}\widetilde u(1)\|=\|e^{|x|^2/\delta^2}u(1)\|. 
\end{equation*}
 From the log-convexity property of $\|e^{\gamma |x|^2}\widetilde u(t)\|$ established in \cite[Lemma 3]{ekpv08b}, we know that
\begin{equation}\label{E: 125}
\sup_{[0,1]}\|e^{\gamma |x|^2}\widetilde u(t)\|\le e^{N\left(1+\|\widetilde V\|_{L^\infty(\Rn\times [0,1])}^2\right)}\left(\|e^{\gamma |x|^2}\widetilde u(0)\|+\|e^{\gamma |x|^2}\widetilde u(1)\|\right).
\end{equation}
The last claim in \cite[Lemma 5]{ekpv08b} shows that with $s=\frac{\beta t}{\alpha(1-t)+\beta t}$,
\begin{equation}\label{E:126}
\|e^{\gamma |x|^2}\widetilde u(t)\|=\|e^{\left[\tfrac{\gamma\alpha\beta}{(\alpha s+\beta(1-s))^2}+\tfrac{\alpha-\beta}{4(\alpha s+\beta(1-s))}\right]|y|^2}u(s)\|,\ \text{for}\ 0\le t\le 1.
\end{equation}
From \eqref{E: 125} and \eqref{E:126}, we find that
\begin{equation}\label{E:127}
\sup_{[0,1]}\|e^{\frac{t|x|^2}{\left(\delta+2-2t\right)^2}}u(t)\|\le e^{N\left(1+\|V\|_{\infty}^2\right)}\left[\|u(0)\|+\|e^{ |x|^2/\delta^2} u(1)\|\right].
\end{equation}
We then begin an inductive procedure where at the $k$th step we have constructed $k$ smooth functions, $a_j:[0,1]\longrightarrow [0,+\infty)$ verifying
\begin{equation}\label{E: cadena}
0<a_1<a_2<\dots<a_k<\dots\le \frac 1{\delta^2-4}\, ,\ \text{in}\ (0,1),
\end{equation}
\begin{equation}\label{E: algunas propiedades mas}
a_j(0)= 0,\ a_j(1)=1/\delta^2,\ \left(e^{8A_j}a_j\right)''> 0,\ \text{in}\  [0,1],
\end{equation}
\begin{equation}\label{E: algoagradable222}
\sup_{[0,1]}\|e^{a_j(t) |x|^2}u(t)\|\le  e^{N\left(1+\|V\|_{\infty}^2\right)}\left[\|u(0)\|+\|e^{ |x|^2/\delta^2} u(1)\|\right],
\end{equation}
when $j=1,\dots,k$, with $A_j'=a_j$, $A_j(1)=0$.
The case $k=1$ follows from \eqref{E:127} with $a_1(t)=t/\left(\delta+2-2t\right)^2$. Assume now that $a_1,\dots ,a_k$ have been constructed and let $b_k$ and $T_k$ be the functions defined in Lemma \ref{L: el calculo del conmutador mas largo2} for $a=a_k$. Then,
\begin{multline}\label{E: quealegria}
\|e^{a_k(t)|x|^2+b_k(t)x\cdot\xi-T_k(t)|\xi|^2}u(t)\|^2\\\le e^{2N\left(1+\|V\|_{\infty}^2\right)}\left(\|u(0)\|+\|e^{|x|^2/\delta^2}u(1)\|\right)^2,
\end{multline}
for $0\le t\le 1$ and all $\xi\in\Rn$. Observe that \eqref{E: quealegria} and the existence of the solutions $u_R$ defined in \eqref{E: el enemigo} imply that  $T_k>0$ in $(0,1)$, when $\delta>2$. Otherwise, \eqref{E: quealegria} implies that $u_R\equiv 0$, when $2\sqrt{1+R^2}<\delta$.

For $\e>0$, multiply \eqref{E: quealegria} by $e^{-2\e T_k(t)|\xi|^2}$ and integrate the new inequality with respect to $\xi$ in $\Rn$. It gives,
\begin{equation*}
\sup_{[0,1]}\|e^{a_{k+1}^\e(t)|x|^2}u(t)\|\le \left(1+\tfrac{1}{\e}\right)^{\frac n4}e^{N\left(1+\|V\|_{\infty}^2\right)}\left(\|u(0)\|+\|e^{|x|^2/\delta^2}u(1)\|\right),
\end{equation*}
with
\begin{equation*}
a_{k+1}^\e =a_k+\frac{b_k^2}{4\left(1+\e\right)T_k}\, .
\end{equation*}
On the other hand, $e^{8A_k}b_k$ is strictly convex and $b_k<0$ in $[0,1]$, 
\begin{equation}\label{E: unaecuacioncilla}
b_k(t)=2\left(a_k(t)-te^{-8A_k(t)}\delta^{-2}\right)
\end{equation}
and
\begin{equation*}
T_k(t)=2\int_0^tb_k^2(s)\,ds-a_k(t)-8\int_0^ta_k^2(s)\,ds-\alpha_k\int_0^te^{-8A_k(s)}\,ds,
\end{equation*} 
with 
\begin{equation*}
\alpha_k=\left(2\int_0^1b_k^2(s)\,ds-\frac 1{\delta^2}-8\int_0^1a_k^2(s)\,ds\right)\left(\int_0^1e^{-8A_k(s)}\,ds\right)^{-1}.
\end{equation*}
The last two formulae and \eqref{E: cadena} show that there is $N_\delta \ge 1$, independent of $k\ge 1$, such that 
\begin{equation}\label{E: elegir}
T_k(t)\le 2\left(\int_0^tb^2_k(s)\,ds+N_\delta\right)\ \text{and}\ N_\delta+\frac{b_k}2\ge 1,\ \text{in}\ [0,1]. 
\end{equation}
Also, $\left(\left(a'_k+4a_k^2\right)e^{16A_k}\right)'=e^{8A_k}\left(e^{8A_k}a_k\right)''$, $\left(a_k'+4a_k^2\right)e^{16A_k}$ is non decreasing in $[0,1]$ and 
\begin{equation}\label{E:monotonia}
a'_k+4a_k^2\ge 0\ \text{in}\  [0,1].
\end{equation}
 Set then,
\begin{equation}\label{E: metodoconstrucion}
a_{k+1}(t)=a_k(t)+\frac{b_k^2(t)}{8\left(\int_0^tb_k^2(s)\,ds+N_\delta\right)}\, .
\end{equation}
We have, $a_k <a_{k+1}$ in $(0,1)$, $a_{k+1}(0)=0$, $a_{k+1}(1)=\frac 1{\delta^2}$, 
\begin{equation}\label{E:comoodefino}
A_{k+1}=A_k+\frac 18 \log{\left(\int_0^tb_k^2(s)\,ds + N_\delta\right)}-\frac 18 \log{\left(\int_0^1b_k^2(s)\,ds + N_\delta\right)},
\end{equation}
and
\begin{equation*}
\sup_{[0,1]}\|e^{\left(a_{k+1}(t) -\e\right)|x|^2}u(t)\| < +\infty,\ \text{for all}\ \e>0.
\end{equation*}
The identity $\left(e^{8A}\right)'''=8\left(e^{8A}a\right)''$ and \eqref{E:comoodefino} show that $\left(e^{8A_{k+1}}a_{k+1}\right)''$ is a positive multiple of   
\begin{multline*}
\left(e^{8A_{k}}\left(\int_0^tb_k^2(s)\,ds+N_\delta\right)\right)'''=\left(e^{8A_{k}}\right)''' \left(\int_0^tb_k^2(s)\,ds+N_\delta\right)\\
+3\left(e^{8A_{k}}\right)''b_k^2+6\left(e^{8A_{k}}\right)'b_kb_k'+2e^{8A_{k}}\left(b_k''b_k+b_k'^2\right)
\end{multline*}
The equation verified by $b_k$ shows that the last sum is equal to
\begin{multline*}
\left(e^{8A_{k}}\right)''' \left(\int_0^tb_k^2(s)\,ds+N_\delta+\frac{b_k}2\right)
\\+8\left(a_k'+8a_k^2\right)e^{8A_{k}}b_k^2+2e^{8A_{k}}b_k'^2+16e^{8A_{k}}a_kb_kb_k'.
\end{multline*}
From \eqref{E: elegir} and \eqref{E:monotonia}, the above sum is bounded from below by
\begin{equation*}
\left(e^{8A_{k}}\right)''' +2e^{8A_{k}}\left(4a_kb_k+b_k'\right)^2>0,\ \text{in}\ [0,1].
\end{equation*}
The later and Lemma \ref{L: el calculo del conmutador mas largo2} show that \eqref{E: algoagradable222} holds up to  $j=k+1$. Finally, because \eqref{E:monotonia} holds with $k$ replaced by $k+1$,
\begin{equation*}
-\left(\frac 1{a_{k+1}}\right)'+4\ge 0,\ \text{in}\ (0,1],
\end{equation*}
 and the integration of this identity over $[t,1]$ shows that $a_{k+1}(t)\le \frac 1{\delta^2-4}$ in $(0,1)$. 
 
 Thus, there exists $a(t)=\lim_{k\to +\infty}a_k(t)$ and from \eqref{E: metodoconstrucion}, $\lim_{k\to +\infty}b_k(t)=0$. This and \eqref{E: unaecuacioncilla} show that 
 \begin{equation}\label{E: a integrar}
 ae^{8A}=t\delta^{-2}, \text{in}\ [0,1].
 \end{equation}
 Write $a(1)= 1/\delta^2$ as $1/4\left(1+R^2\right)$, for some $R>0$. Then, $a(t) = t/4\left(t^2+R^2\right)$ follows from the integration of \eqref{E: a integrar} and \eqref{E: loque hay que conseguir} from \eqref{E: algoagradable222} after letting $j\to +\infty$. Finally, when $\delta= 2$, we have
 \begin{equation*}
\sup_{[0,1]}\|e^{t|x|^2/4\left(t^2+R^2\right)}u(t)\| \le e^{N\left(1+\|V\|^2_{\infty}\right)} \left[\|u(0)\|_{L^2(\Rn)}+\|e^{|x|^2/4}u(1)\|\right],
\end{equation*}
for all $R>0$. Letting $R\to 0^+$, we get
 \begin{equation*}
\sup_{[0,1]}\|e^{|x|^2/4t}u(t)\| \le e^{N\left(1+\|V\|^2_{\infty}\right)} \left[\|u(0)\|_{L^2(\Rn)}+\|e^{|x|^2/4}u(1)\|\right],
\end{equation*}
and it implies, $u\equiv 0$.
\end{proof}
\begin{remark}\label{R:1} Theorem \ref{T: lamejora1} holds when \eqref{E: 0} and \eqref{E: loque hay que conseguir} are replaced respectively by
\begin{equation*}
\|e^{Tx_1^2/4(T^2+R^2)}u(T)\|_{L^2(\Rn)}<+\infty
\end{equation*}
and
\begin{multline*}
\sup_{[0,T]}\|e^{tx_1^2/4(t^2+R^2)}u(t)\|_{L^2(\Rn)}\\
\le e^{N\left(1+T^2\|V\|^2_{L^\infty(\Rn\times [0,T])}\right)}\left[\|u(0)\|_{L^2(\Rn)}+\|e^{Tx_1^2/4(T^2+R^2)}u(T)\|_{L^2(\Rn)}\right].
\end{multline*}

\end{remark}
\end{section}


\begin{thebibliography}{99}

\bibitem{bonamie}  A. Bonami, B. Demange, \emph{A survey on uncertainty principles related to quadratic forms}. Collect. Math. Vol. Extra (2006) 1--36. 

\bibitem{bonamie2}  A. Bonami, B. Demange, P. Jaming, \emph{Hermite functions and uncertainty principles for the Fourier and the windowed Fourier transforms}. Rev. Mat. Iberoamericana {\bf 19},1  (2006) 23--55.


\bibitem{CoPr} M. Cowling, J. F. Price, \emph{Generalizations of Heisenberg's inequality}, Harmonic Analysis (Cortona, 1982) Lecture Notes in Math.,{\bf 992} (1983), 443-449, Springer, Berlin.

\bibitem{CoPr2} M. Cowling, J. F. Price. \emph{Bandwidth versus time concentration: the Heisenberg--Pauli--Weyl inequality}.
SIAM J. Math. Anal. {\bf 15} (1984) 151--165.

\bibitem{ds} H. Dong, W. Staubach.  \emph{Unique continuation for the Schr\"odinger equation with gradient vector potentials}. Proc. Amer. Math. Soc.
 {\bf 135}, 7 (2007) 2141--2149.

\bibitem{ekpv06} L. Escauriaza, C.E. Kenig, G. Ponce, L. Vega,  \emph{On Uniqueness  Properties of Solutions of Schr\"odinger  Equations}. Comm. PDE. {\bf 31}, 12 (2006) 1811--1823.


\bibitem{ekpv06a} L. Escauriaza, C.E. Kenig, G. Ponce, L. Vega, \emph{Decay at Infinity of Caloric Functions within Characteristic Hyperplanes}, Math. Res. Letters {\bf 13}, 3 (2006) 441--453.

\bibitem{ekpv08a} L. Escauriaza, C.E. Kenig, G. Ponce, L. Vega,  \emph{Convexity of Free Solutions of Schr\"odinger Equations with Gaussian Decay}. Math. Res. Lett. {\bf 15}, 5 (2008) 957--971.

\bibitem{ekpv08b} L. Escauriaza, C.E. Kenig, G. Ponce, L. Vega,  \emph{Hardy's Uncertainty Principle, Convexity and Schr\"odinger Evolutions}. J. Eur. Math. Soc. {\bf 10}, 4 (2008) 883--907.

\bibitem{ekpv09} L. Escauriaza, C.E. Kenig, G. Ponce, L. Vega.  \emph{The Sharp Hardy Uncertainty Principle for Schr\"odinger Evolutions}. Duke Math. J. {\bf 155}, 1 (2010) 163--187.

\bibitem{cekpv10} M. Cowling, L. Escauriaza, C.E. Kenig, G. Ponce, L. Vega.  \emph{The Hardy Uncertainty Principle Revisited}. Indiana U. Math. J. {\bf 59}, 6 (2010) 2007--2026.

\bibitem{ekpv09b} L. Escauriaza, C.E. Kenig, G. Ponce, L. Vega.  \emph{Uncertainty Principle of Morgan Type for Schršdinger Evolutions}. J. London Math. Soc. {\bf 83}, 1 (2011) 187--207.

\bibitem{ekpv12} L. Escauriaza, C.E. Kenig, G. Ponce, L. Vega.  \emph{Uniqueness Properties of Solutions to Schr\"odinger Equations}. Bull. (New Series) of the Amer. Math. Soc. {\bf 49} (2012) 415--442.



\bibitem{Hardy} G.H. Hardy, \emph{A Theorem Concerning Fourier Transforms}, J. London Math. Soc. s1-8 (1933) 227--231.

\bibitem{Hor2} L. H\"ormander, \emph{A uniqueness theorem of Beurling for Fourier transform pairs}, Ark. Mat. {\bf 29}, 2 (1991) 237--240.

\bibitem{Ioke04} A. D. Ionescu, C. E. Kenig, \emph{$L^p$-Carleman inequalities and uniqueness of solutions of nonlinear Schr\"odinger equations,} Acta Math. {\bf 193}, 2 (2004) 193--239.

\bibitem{Ioke06} A. D. Ionescu, C. E. Kenig, \emph{Uniqueness properties of solutions of Schr\"odinger equations,} J. Funct. Anal. {\bf 232} (2006) 90--136.

\bibitem{kpv02} C.E. Kenig, G. Ponce, L. Vega, \emph{On unique continuation for nonlinear Schr\"odinger equations}, Comm. Pure Appl. Math. {\bf 60} (2002) 1247--1262.

\bibitem{kpv03} C.E. Kenig, G. Ponce, L. Vega. \emph{A Theorem of Paley-Wiener Type for Schr\"odinger Evolutions}. Annales Scientifiques  Ec. Norm. Sup. {\bf 47} (2014) 539-557.

\bibitem{SiSu} A. Sitaram, M. Sundari, S. Thangavelu, {Uncertainty principles on certain Lie groups},  Proc. Indian Acad. Sci. Math. Sci. {\bf 105} (1995), 135-151


\bibitem{StSh} E.M. Stein, R. Shakarchi, \emph{Princeton Lecture in Analysis II. Complex Analysis,} Princeton University Press (2003).

\bibitem{Temam} R. Temam, \emph{Navier-Stokes Equations, Theory and Numerical Analysis.} Amer. Math. Soc (1977). 

\end{thebibliography}
\end{document}